\address[kauffman@uic.edu]{Louis Hirsch Kauffman, Department of Mathematics, Statistics and Computer Science (m/c 249), 851 South Morgan Street, University of Illinois at Chicago,
Chicago, Illinois 60607-7045}
\address[vomanturov@yandex.ru]{Vassily Olegovich Manturov,
Chair of Differential Equations,
and Mathematical Physics, Faculty of Sciences,
Peoples' Friendship University of Russia,
117198, Moscow, Ordjonikidze St., 3}
\newtheorem{theorem}{Theorem}[section]
\newtheorem{corollary}[theorem]{Corollary}
\theoremstyle{definition}
\newtheorem{remark}[theorem]{Remark}
 \def\Z{{\mathbb Z}}
 \def\0{{\mathbbf 0}}
 \def\1{{\mathbbf 1}}
\newcommand{\skcrro}{\raisebox{-0.25\height}{\includegraphics[width=0.5cm]{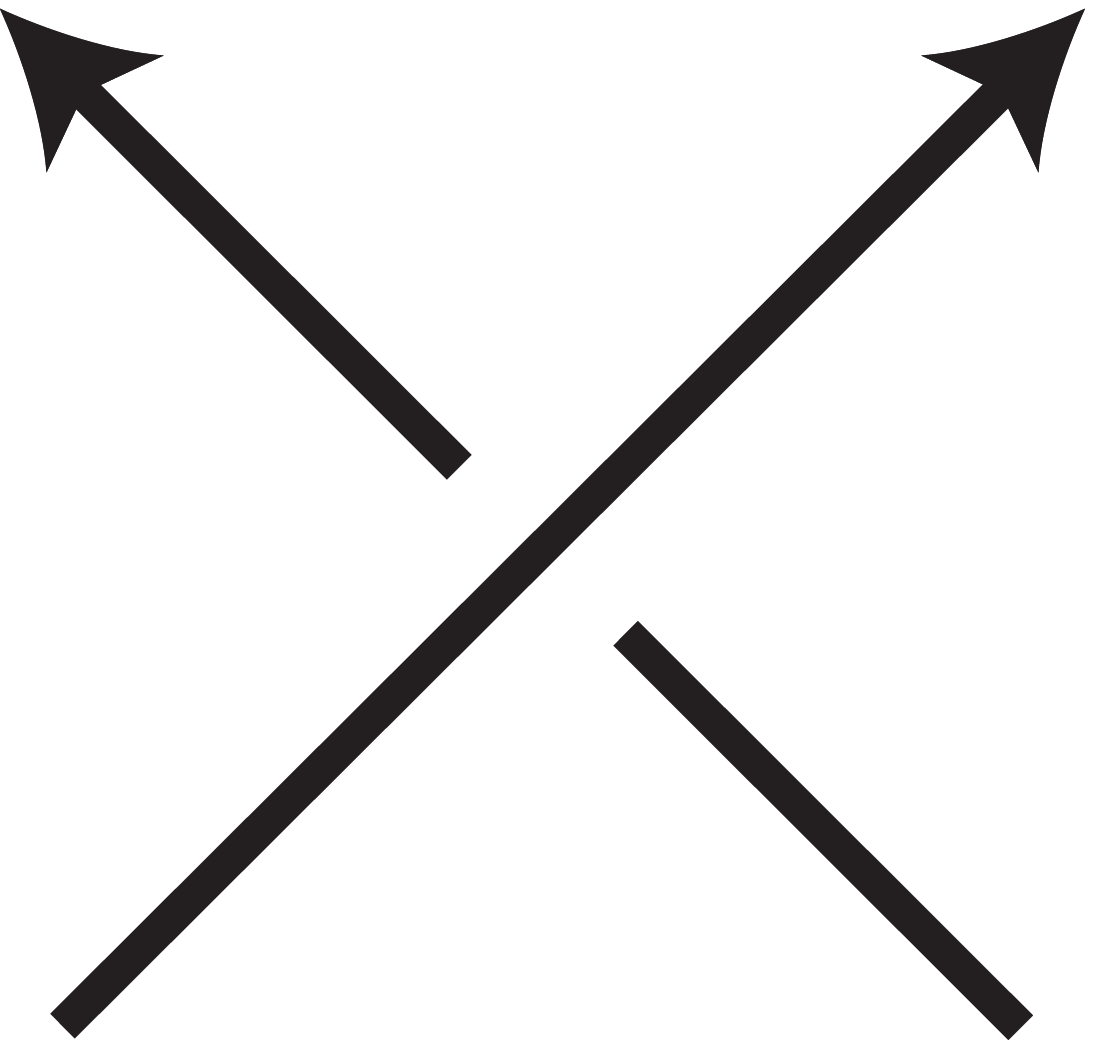}}}
\newcommand{\skcrlo}{\raisebox{-0.25\height}{\includegraphics[width=0.5cm]{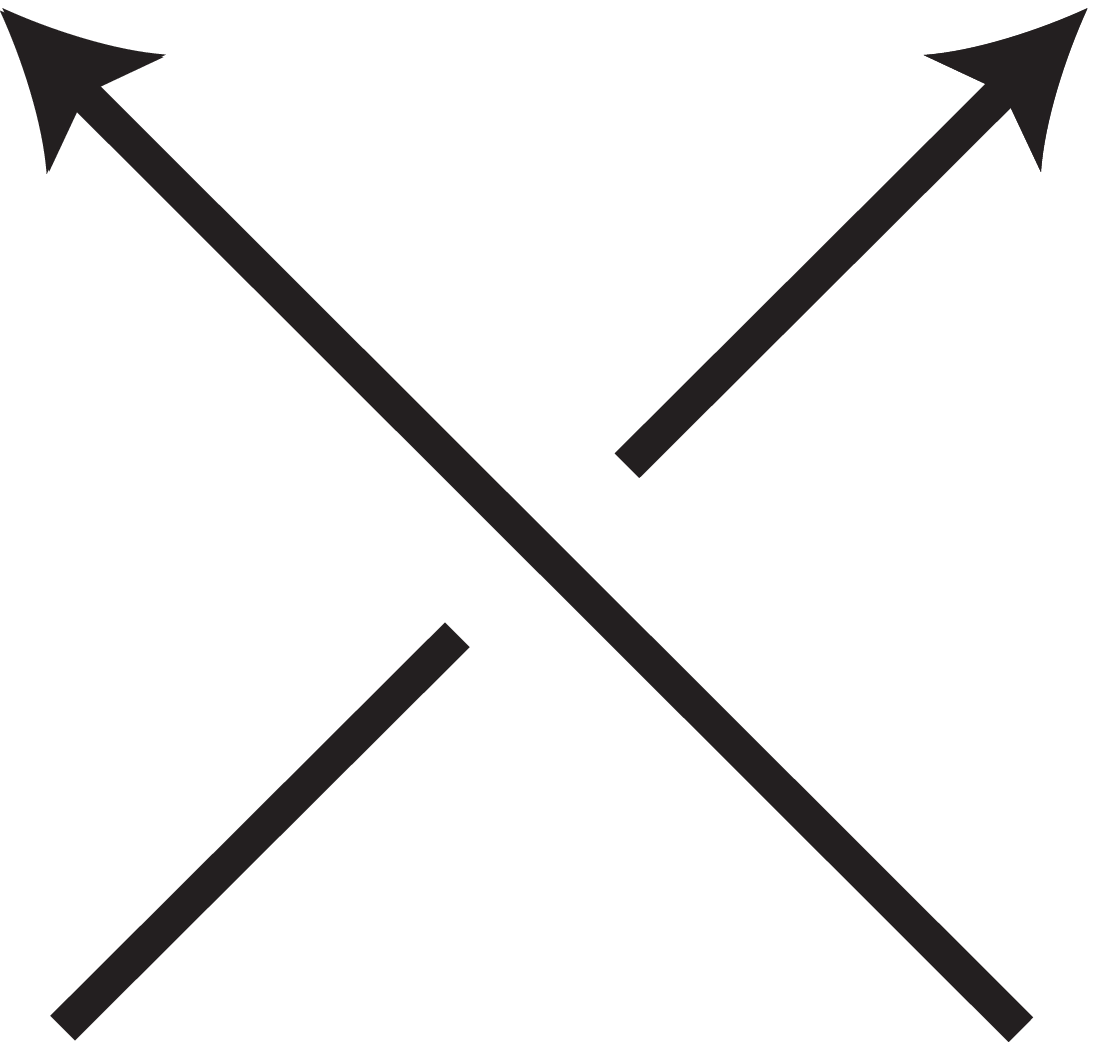}}}
\title[$sl(3)$ Invariant for Virtual Knots]{A Graphical Construction of the $sl(3)$ Invariant for Virtual Knots}
\author[Kauffman and Manturov]{Louis H.  Kauffman and Vassily O. Manturov \thanks{We would like to take this opportunity to thank the 
Mathematisches Forschungsinstitut Oberwolfach for their hospitality and wonderful research atmosphere. This paper was conceived and mostly written at the MFO in June of 2012 at a Research in Pairs of the present authors.
The second named author (V.O.M.) was partially supported by grants of the Russian Government
11.G34.31.0053,  RF President NSh 1410.2012.1, Ministry of Education and Science of the Russian Federation 14.740.11.0794.
}}
\begin{document}

\maketitle

\begin{abstract}
We construct a graph-valued analogue of the Homflypt
$sl(3)$ invariant for virtual knots. The restriction of this
invariant for classical knots coincides with the usual Homflypt
$sl(3)$ invariant, and for virtual knots and graphs it provides new
information that allows one to prove minimality theorems and to 
construct new invariants for free knots.  A novel feature of this approach is that some knots are 
of sufficient complexity that they evaluate themselves in the sense that the invariant is the knot 
itself seen as a combinatorial structure.
\end{abstract}

\begin{classification}
57M25.
\end{classification}

\begin{keywords}
Knot, link, virtual knot, graph, invariant, Kuperberg bracket, quantum invariant
\end{keywords}

\section{Introduction}
This paper studies a generalization to virtual knot theory of
the Kuperberg $sl(3)$ bracket invariant. Kuperberg  discovered a 
bracket state sum for the $sl(3)$ specialization of
the Homflypt polynomial that depends upon a reductive graphical procedure
similar to the Kauffman bracket but more complex. 
\bigbreak 

In this paper we show that the
Kuperberg bracket can be uniquely defined and generalized to virtual knot
theory via its reductive graphical equations. These equations reduce
to scalars only for the planar graphs from classical knots. For 
virtual knots, there are unique graphical reductions to linear
combinations of reduced graphs with Laurent polynomial coefficients.
Let us call these ``graph polynomials".
The ideal case, sometimes realized, is when the topological object is itself
the invariant, due to irreducibility. When this happens one can point to
combinatorial features of a topological object that must occur in all of
its representatives (first pointed out by Manturov in the context of parity).
This extended Kuperberg bracket specializes to
an invariant of free knots and allows us to prove that many free knots are
non-trivial without using the parity restrictions we had been tied to
before.  
\smallbreak

The aim of the present article is to extend the Kuperberg
combinatorial construction of the quantum $sl(3)$ invariant for the
case of virtual knots. In speaking of knots in this paper we refer to both knots and links.

In Figure~\ref{kuperberg}, we adopt the usual convention that whenever we give a
picture of a relation, we draw only the changing part of it; outside
the figure drawn, the diagrams are identical.
\smallbreak

\begin{figure}[htb]
     \begin{center}
     \begin{tabular}{c}
     \includegraphics[width=150pt]{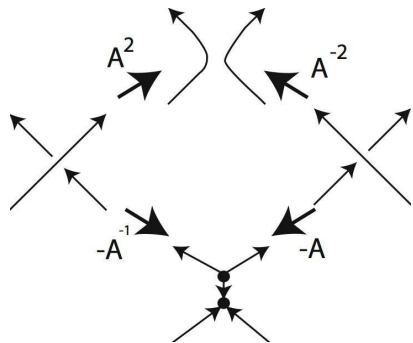}
     \end{tabular}
     \caption{\bf Kuperberg's relation for $sl(3)$}
     \label{kuperberg}
\end{center}
\end{figure}


For the case of the $sl(3)$ knot
invariant, one uses the relation shown in Figure~\ref{kuperberg}, see \cite{Kup}.
This means that the left (resp., right) picture of (\ref{kuperberg})
is resolved to a combination of the upper and lower pictures with
coefficients indicated on the arrows. The
advantage of Kuperberg's approach is that graphs of this sort which
can be drawn on the plane can be easily simplified, by using further
linear relations, to collections of Jordan curves, which in turn,
evaluate to elements from $\Z[A,A^{-1}]$. For planar graphs, these reductions continue all the way to scalars. In the case of non-planar graphs, there is no immediate way to
resolve such graphs to linear combinations of collections of
circles. We take it as an extra advantage of this approach that the non-planar resolutions leave irreducible graphs whose properties reflect the topology of virtual knots and links.
\smallbreak

The present paper is organized as follows. Section 2 is a review of concepts from virtual knot theory,
flat virtual knot theory and free knot theory.   Section 3 contains the construction of the main invariant in this paper, generalizing the Kuperberg bracket for $sl(3).$ Applications of this invariant to questions of minimality will be given elsewhere. Section 4 contains remarks about  the results in the paper and directions for future work.
\smallbreak

\section{Basics of Virtual Knot Theory, Flat Knots and Free Knots}
This section contains a summary of definitions and concepts in virtual knot theory that will be used in the rest of the paper.
\smallbreak

{\it Virtual knot theory} studies the  embeddings of curves in thickened surfaces of arbitrary
genus, up to the addition and removal of empty handles from the surface. See \cite{VKT,DVK}.  
Virtual knots have a special diagrammatic theory, described below,
that makes handling them
very similar to the handling of classical knot diagrams.  \smallbreak  

In the diagrammatic theory of virtual knots one adds 
a {\em virtual crossing} (see Figure~\ref{Figure 1}) that is neither an over-crossing
nor an under-crossing.  A virtual crossing is represented by two crossing segments with a small circle
placed around the crossing point. 
\smallbreak

Moves on virtual diagrams generalize the Reidemeister moves for classical knot and link
diagrams (Figure~\ref{Figure 1}).  The detour move is illustrated in 
Figure~\ref{Figure 2}.  The moves designated by (B) and (C) in Figure~\ref{Figure 1}, taken together, are equivalent to the detour move. Virtual knot and link diagrams that can be connected by a finite 
sequence of these moves are said to be {\it equivalent} or {\it virtually isotopic}.
A virtual knot is an equivalence class of virtual diagrams under these moves.
\smallbreak

\begin{figure}[htb]
     \begin{center}
     \begin{tabular}{c}
     \includegraphics[width=10cm]{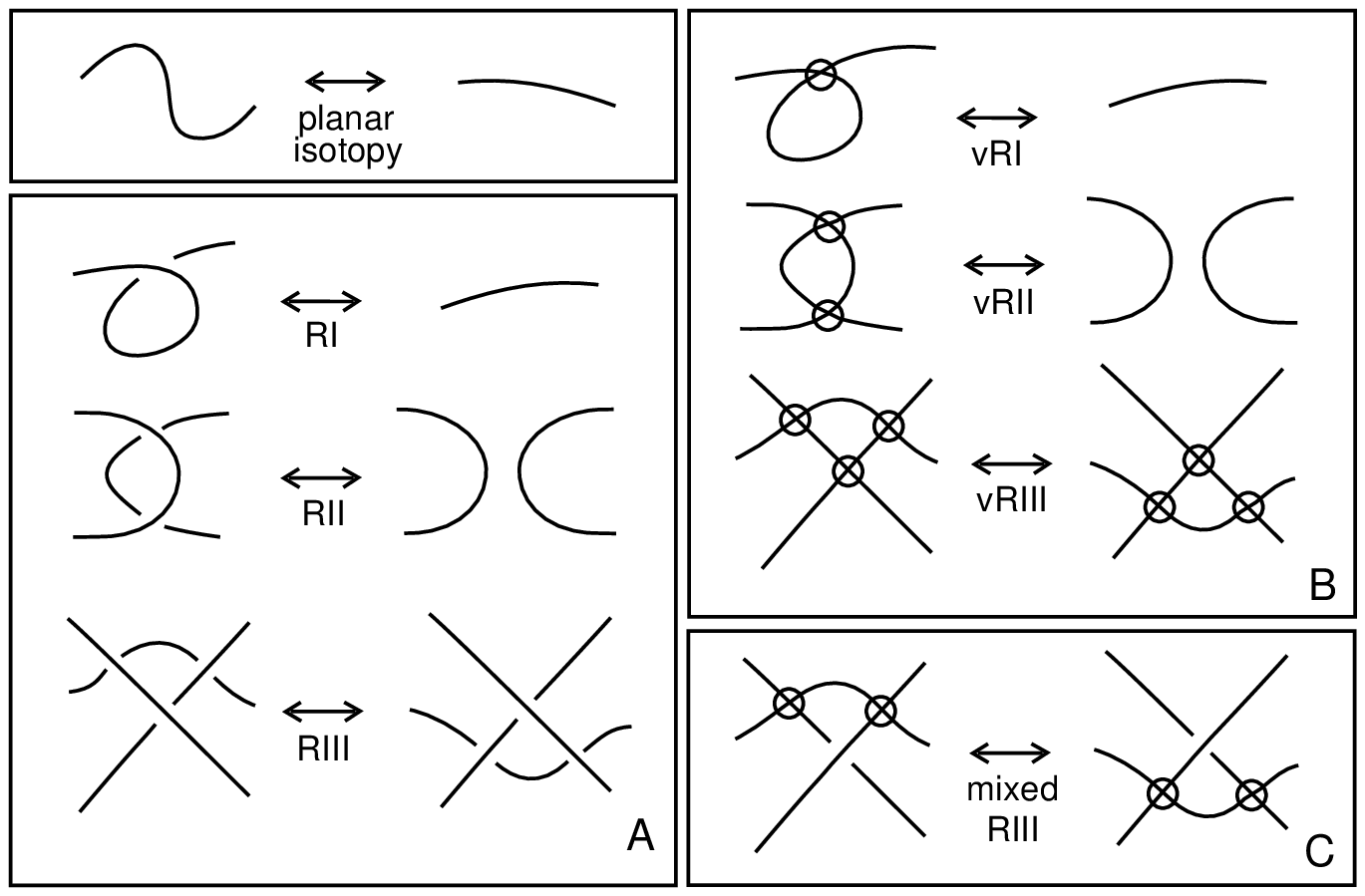}
     \end{tabular}
     \caption{\bf Moves}
     \label{Figure 1}
\end{center}
\end{figure}

\begin{figure}[htb]
     \begin{center}
     \begin{tabular}{c}
     \includegraphics[width=10cm]{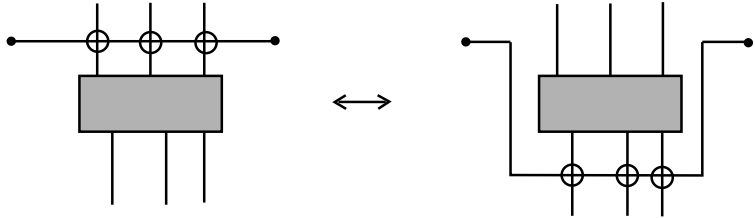}
     \end{tabular}
     \caption{\bf Detour Move}
     \label{Figure 2}
\end{center}
\end{figure}

Virtual diagrams can be regarded as representatives for oriented Gauss codes \cite{GPV}, \cite{VKT,SVKT} 
(Gauss diagrams). Such codes do not always have planar realizations.   {\it Virtual isotopy is the same as the equivalence relation generated on the collection
of oriented Gauss codes by abstract Reidemeister moves on these codes.}   The reader can see this approach in \cite{DKT,GPV,MB}. It is of interest to know the least number of virtual crossings that can occur in a diagram of a virtual knot or link. If this virtual crossing number is zero, then the link is classical. For some results about estimating virtual crossing number see \cite{DyeKauff,ExtBr,MV} and see the results of Corollaries $3$ and $4$ in Section $3$ of
the present paper.
\bigbreak

\begin{figure}
     \begin{center}
     \begin{tabular}{c}
     \includegraphics[width=5cm]{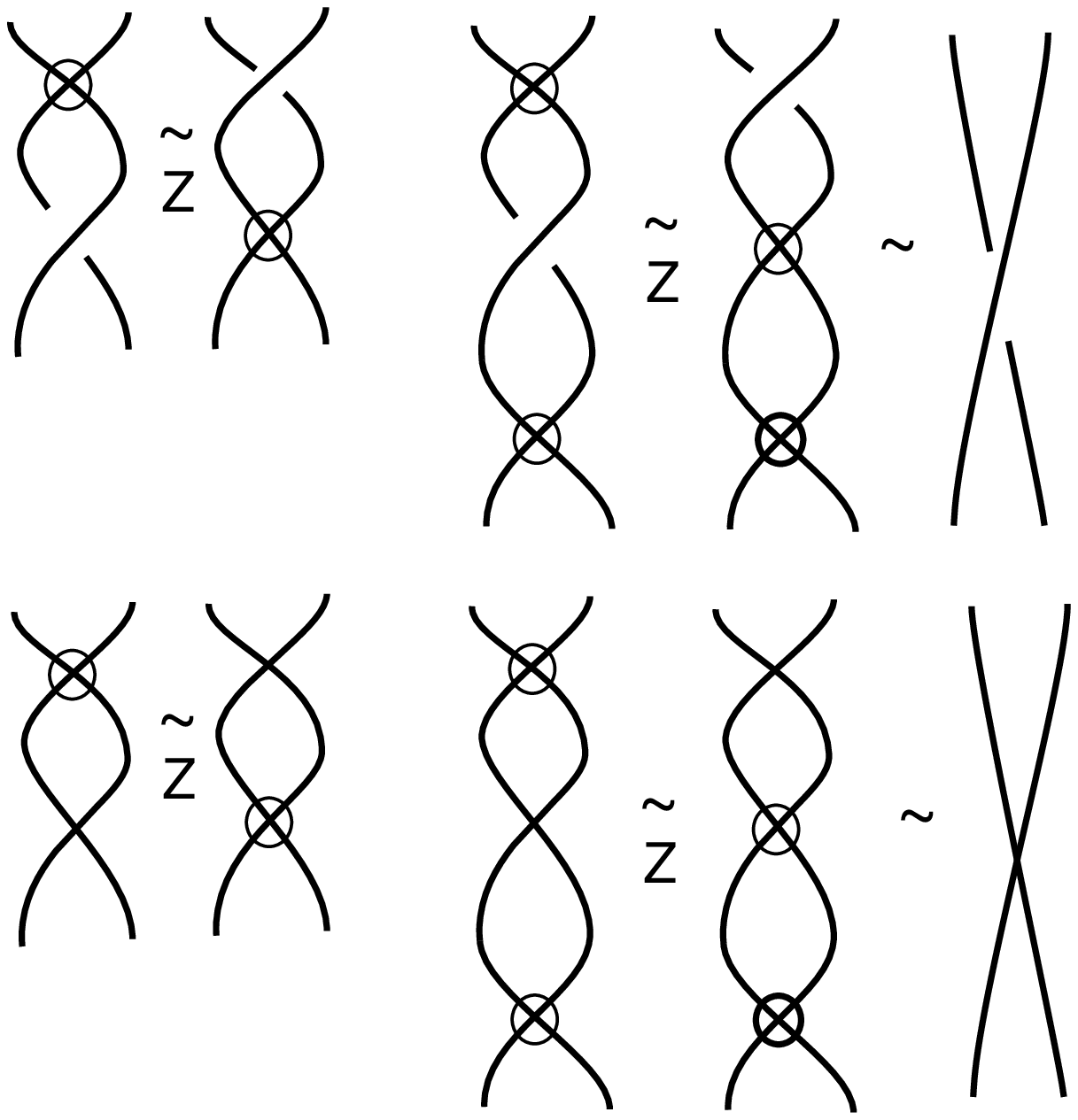}
     \end{tabular}
     \caption{\bf The Z-Move}
     \label{fig2}
\end{center}
\end{figure}

\noindent {\bf Flat and Free Knots and Links.}
Every classical knot diagram can be regarded as a $4$-regular plane graph with extra structure at the 
nodes. Let a {\em flat virtual diagram} be a diagram with {\it virtual crossings} as we have
described them and {\em flat crossings} consisting in undecorated nodes of the $4$-regular plane graph, retaining the cyclic order at a node. Two flat virtual diagrams are {\em equivalent} if
there is a  sequence of generalized flat Reidemeister moves (as illustrated in Figure~\ref{Figure 1}) taking one to the other. A generalized
flat Reidemeister move is any move as shown in Figure~\ref{Figure 1} where one ignores the over or under crossing structure. The moves for flat virtual knots are obtained by taking Figure~\ref{Figure 1} and replacing all the classical crossings by flat (but not virtual) crossings.
In studying flat virtuals the rules for changing virtual crossings among themselves and the rules for changing
flat crossings among themselves are identical. Detour moves as in part C of Figure~\ref{Figure 1} are available for virtual crossings
with respect to flat crossings and {\it not} the other way around. 
\smallbreak

To each virtual diagram $K$ there is an associated 
flat diagram $F(K)$, obtained by forgetting the extra structure at the classical crossings in $K.$ 
We say that a virtual diagram {\em overlies} a flat diagram if the virtual diagram is obtained from the flat diagram by choosing a crossing type for each flat crossing in the virtual diagram. 
If $K$ and $K'$
are isotopic as virtual diagrams, then $F(K)$ and $F(K')$ are isotopic as flat virtual diagrams. Thus, if we can
show that $F(K)$ is not reducible to a disjoint union of circles, then it will follow that $K$ is a non-trivial  and non-classical virtual link.   
\smallbreak 

\noindent {\bf Definition.}
A virtual graph is a $4-regular$ graph that is immersed in the plane giving a choice of cyclic orders at its nodes.  The edges at the nodes are connected according to the abstract definition of the graph and are embedded into the plane so that they intersect transversely. These intersections are taken as virtual crossings and are subject to the detour move just as in virtual link diagrams.  
We allow circles along with the graphs of any kind in our work with graph theory.
\smallbreak

\noindent  {\bf Framed Nodes and Framed Graphs.}
We use the concept of a {\it framed $4$-valent node} where we only specify the pairings of {\it opposite edges} at the node. In the cyclic order, two  edges are said to be opposite if they are paired by skipping one edge as one goes around. If the cyclic order of a node is $[a,b,c,d]$ where these letters label the edges incident to the node, then we say that edges $a$ and  $c$ are {\it opposite}, and that edges
$b$ and $d$ are {\it opposite}.  We can change the cyclic order and keep the opposite relation.
For example, in $[c,b,a,d]$ it is still the case that the opposite pairs are $a,c$ and $b,d.$ A {\it framed $4$-valent graph} is a $4$-valent graph where every node is framed. When we represent a framed $4$-valent graph as an immersion in the plane, we use virtual crossings for the edge-crossings that are artifacts of the immersion and we regrad the graph as a virtual graph. For an abstract framed $4$-valent graph, there are no classical crossings - the only Reidemeister moves that occur are among the virtual crossings.
\smallbreak

A  {\it component} of a framed graph is obtained by taking a walk on the graph so that the walk contains pairs of opposite edges from every node that is met during the walk. That is, in walking, if you enter a node along a given edge, then you exit the node along its opposite edge. Such a walk produces a cycle in the graph and such cycles are called the {\it components} of the framed graph. Since a link diagram or a flat link diagram is a framed graph, we see that the components of this framed graph are identical with the components of the link as identified  by the topologist. A framed graph with one component is said to be {\it unicursal}.  
\smallbreak

When we take virtual knot diagrams only up to framing of their classical nodes, we are allowing the $Z-move$ as illustrated in Figure~\ref{fig2}.
In the $Z-move$ one can intechange a crossing with an adjacent virtual crossing, as shown in the figure.  We call virtual knots and links modulo the $Z$-move, {\it Z-knots}. We call flat virtual knots modulo the 
Z-move, {\it free knots}. Thus free knots are the same as framed $4$-valent graphs taken up to the flat
Reidemeister moves. 
{\it The theory of free knots is identical with the theory that one gets when one takes Flat Virtuals modulo the flat $Z$-move} as shown in Figure~\ref{fig2}. We say that a {\it virtualization move} has been performed on a crossing if it is flanked by two virtual crossings. We illustrate this operation in Figure~\ref{fig2} and show that virtualization does not change the equivalence class of a flat diagram under the $Z$-move. This means that any invariant of free knots must be invariant under virtualization.
\smallbreak

\section{Construction of the Main Invariant}

Let ${\cal S}$ be the collection of all trivalent bipartite graphs
with edges oriented from vertices of the first part to vertices of
the second part.
Let ${\cal T}=\{t_{1},t_{2},\cdots\}$ be the (infinite) subset of
connected graphs from ${\cal S}$ having neither bigons nor quadrilaterals.
Let ${\cal M}$ be the module $\Z[A,A^{-1}][t_{1},t_{2},\cdots]$ of
formal commutative products of graphs from
${\cal T}$ with coefficients that are Laurent polynomials.
Disjoint unions of graphs  are treated as products in ${\cal M}$.
Our main invariant will be valued in ${\cal M}$.
\smallbreak

\noindent {\bf Statement 1.}
{\it Figure~\ref{kupbra} shows the reduction moves for the Kuperberg bracket. The last three lines of the figure will be called the {\it relations} in that Figure.
There exists a unique map $f:{\cal S}\to {\cal M}$ which satisfies
the relations in Figure~\ref{kupbra}. Note that we have shown part of these relations in Figure~\ref{kuperberg}. The resulting evaluation yields a topological invariant of virtual links when the first two lines of Figure~\ref{kupbra}  are used to expand the link into a sum of elements of ${\cal S}.$}
\bigbreak

\begin{figure}[htb]
     \begin{center}
     \begin{tabular}{c}
     \includegraphics[width=6cm]{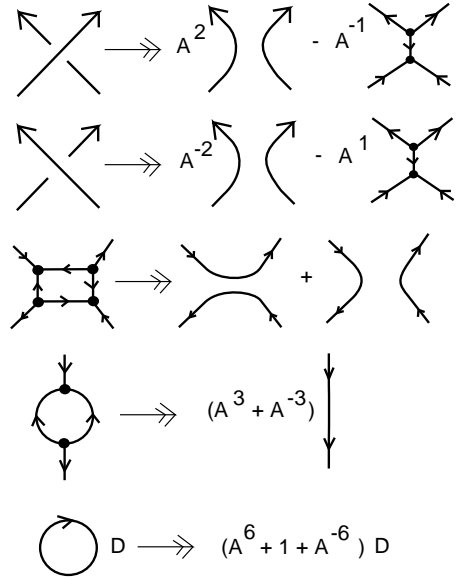}
     \end{tabular}
     \caption{\bf Kuperberg Bracket}
     \label{kupbra}
\end{center}
\end{figure}

\begin{proof}
The relations we are going to use to prove the statement are as shown in Figure~\ref{kupbra}.
Note that for the case of planar tangles this map to diagrams modulo relations was constructed
explicitly by Kuperberg \cite{Kup}, and the image was in
$\Z[A,A^{-1}]$. We are going to follow \cite{Kup}, however, in the
non-planar case, the graphs can not be reduced just to collections
of closed curves (in the case of the plane, Jordan curves) and so
later evaluate to polynomials. In fact, irreducible graphs will appear in the non-planar case.
First, we treat every 1-complex with all components being graphs
from ${\cal S}$ and circles: We treat it as the formal product of
these graphs, where each circle evaluates to the factor
$(A^{6} + A^{-6} +1)$.
We note that if a graph $\Gamma$ from ${\cal S}$ has a bigon
or a quadrilateral, then we can use the relations shown in Figure~\ref{kupbra}
(resolution of quadrilaterals, resolution of bigons, loop evaluation)
to reduce it to a smaller graph (or two graphs, then we consider it
as a product).
So, we can proceed with resolving bigons and quadrilaterals until we
are left with a collection of graphs $t_{j}$ and circles; this gives
us an element from ${\cal M}$; once we prove the uniqueness of the resolution, we set the
stage for proving the existence of the invariant. We must carefully check well-definedness and topological invariance.
\smallbreak

In what follows, we shall often omit the letter
$f$ by identifying graphs with their images or intermediate graphs
which appear after some concrete resolutions.
\smallbreak

Our goal is to show that this map $f:{\cal S}\to {\cal M}$ is
well-defined. We shall prove it by induction on the number of graph edges.
{\it The induction base is obvious} and we leave its articulation to the reader.
To perform the induction step, notice that all of Kuperberg's relations
are {\it reductive}: from a graph we get to a collection of simpler
graphs.
\smallbreak

Assume for all graphs with at most $2n$ vertices that the statement
holds. Now, let us take a graph $\Gamma$ from ${\cal S}$ with $2n+2$
vertices. Without loss of generality, we assume this graph is
connected. If it has neither bigon nor quadrilateral, we just take
the graph itself to be its image.
Otherwise we use the relations {\it resolution of bigons} or
{\it resolution of quadrilaterals} as in Figure~\ref{kupbra}  to reduce it to a linear combination of
simpler graphs; we proceed until we have a sum (with Laurent polynomial coefficients) of
(products of) graphs without bigons and quadrilaterals.
\smallbreak

According to the induction hypothesis, for all simpler graphs, there
is a unique map to ${\cal M}$. However, we can apply the relations
in different ways by starting from a given quadrilateral or a bigon.
We will show that the final result does not depend on the bigon or
quadrilateral we start with.
To this end, it suffices to prove that if $\Gamma$ can be resolved
to $\alpha \Gamma_{1}+\beta \Gamma_{2}$ from one bigon
(quadrilateral) and also to $\alpha' \Gamma'_{1}+\beta' \Gamma'_{2}$ from the other one, then
both linear combinations can be resolved further, and will lead to
the same element of ${\cal M}$. This will show that final reductions are unique.
\smallbreak

Whenever two nodes of a quadrilateral coincide, then two edges coincide and it is 
no longer subject to the quadrilateral reduction relation. Thus we assume that quadrilaterals under discussion have distinct nodes.
Note that if two polygons (bigons or
quadrilaterals) share no common vertex then the corresponding two
resolutions can be performed {\em independently} and, hence, the
result of applying them in any order is the same. So, in this case,
$\alpha \Gamma_{1}+\beta \Gamma_{2}$ and $\alpha' \Gamma'_{1}+\beta'
\Gamma'_{2}$ can be resolved to the same linear combination in one
step. By the hypothesis,
$f(\Gamma_{1}),f(\Gamma_{2}),f(\Gamma'_{1}),f(\Gamma'_{2})$ are all
well defined, so, we can simplify the common resoltion for $\alpha
\Gamma_{1}+\beta \Gamma_{2}$ and $\alpha' \Gamma'_{1}+\beta'
\Gamma'_{2}$ to obtain the correct value for $f$ of any of these two
linear combinations, which means that they coincide.
\smallbreak

If two polygons (bigons or quadrilaterals) share a vertex, then
they share an edge because the graph is trivalent.
If  a connected trivalent graph has two different bigons sharing an
edge then the total number of edges of this graph is three, and the
evaluation of this graph in ${\cal T}$ follows from an easy
calculation.
Therefore, let us assume we have a graph $\Gamma$ with an edge shared by a
bigon and a quadrilateral. We can resolve the quadrilateral first,
or we can resolve the bigon first.
The calculation in Fig. \ref{undobigonsquare}
\begin{figure}
\centering\includegraphics[width=250pt]{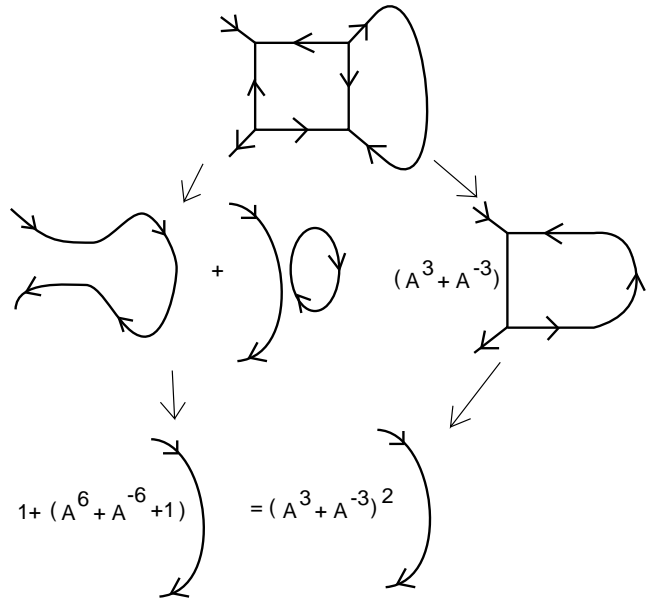}
\caption{\bf Two Ways of Reducing a Quadrilateral and a Bigon}
\label{Resolving a bigon and a quadrilateral}
\label{undobigonsquare}
\end{figure}
shows that after a two-step resolution we get to the same linear
combination.
\smallbreak

A similar situation happens when we deal with two quadrilaterals
sharing an edge, see Fig. \ref{undotwosquares}. Here we have shown
just one particular resolution, but the picture is symmetric, so the
result of the resolution when we start with the right quadrilateral,
will lead us to the same result. See also Fig. \ref{triang} and Fig. \ref{annular}.
These figures illustrate two other ways in which the edge can be shared. Note that
Fig. \ref{triang} illustrates a possibly non-planar case, and that we use the abstract graph structure
(no particular order at the trivalent vertex) in the course of the evaluation. These cases 
cover all the ways that shared edges can occur, as the reader can easily verify.

\begin{figure}
\centering\includegraphics[width=210pt]{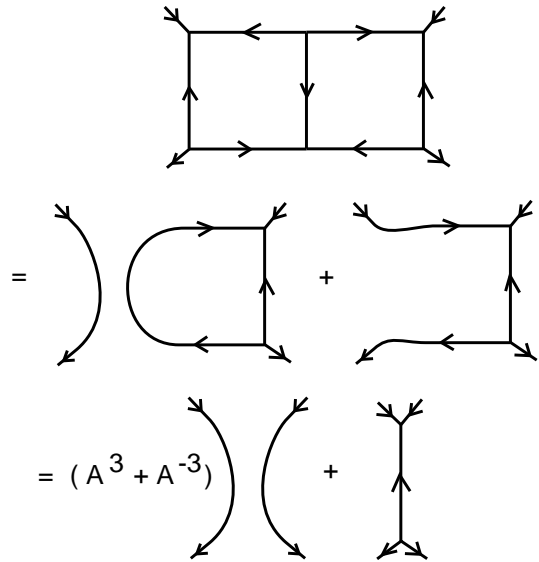}
\caption{\bf Resolving Two Adjacent Squares} \label{Resolving two
squares} \label{undotwosquares}
\end{figure}

\begin{figure}
\centering\includegraphics[width=200pt]{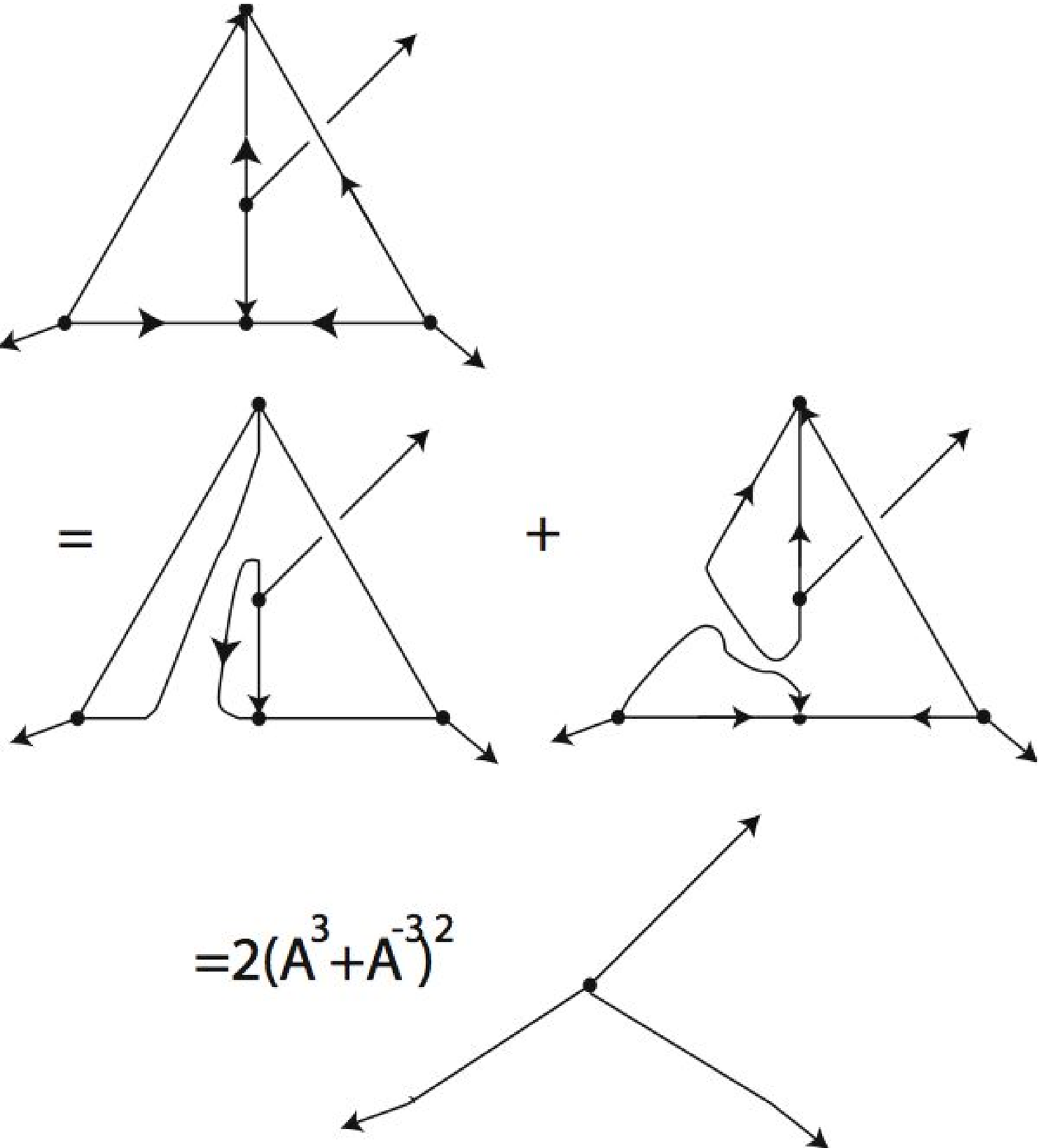}
\caption{\bf Resolving Two Different Adjacent Squares} \label{triang} \label{triang}
\end{figure}

\begin{figure}
\centering\includegraphics[width=200pt]{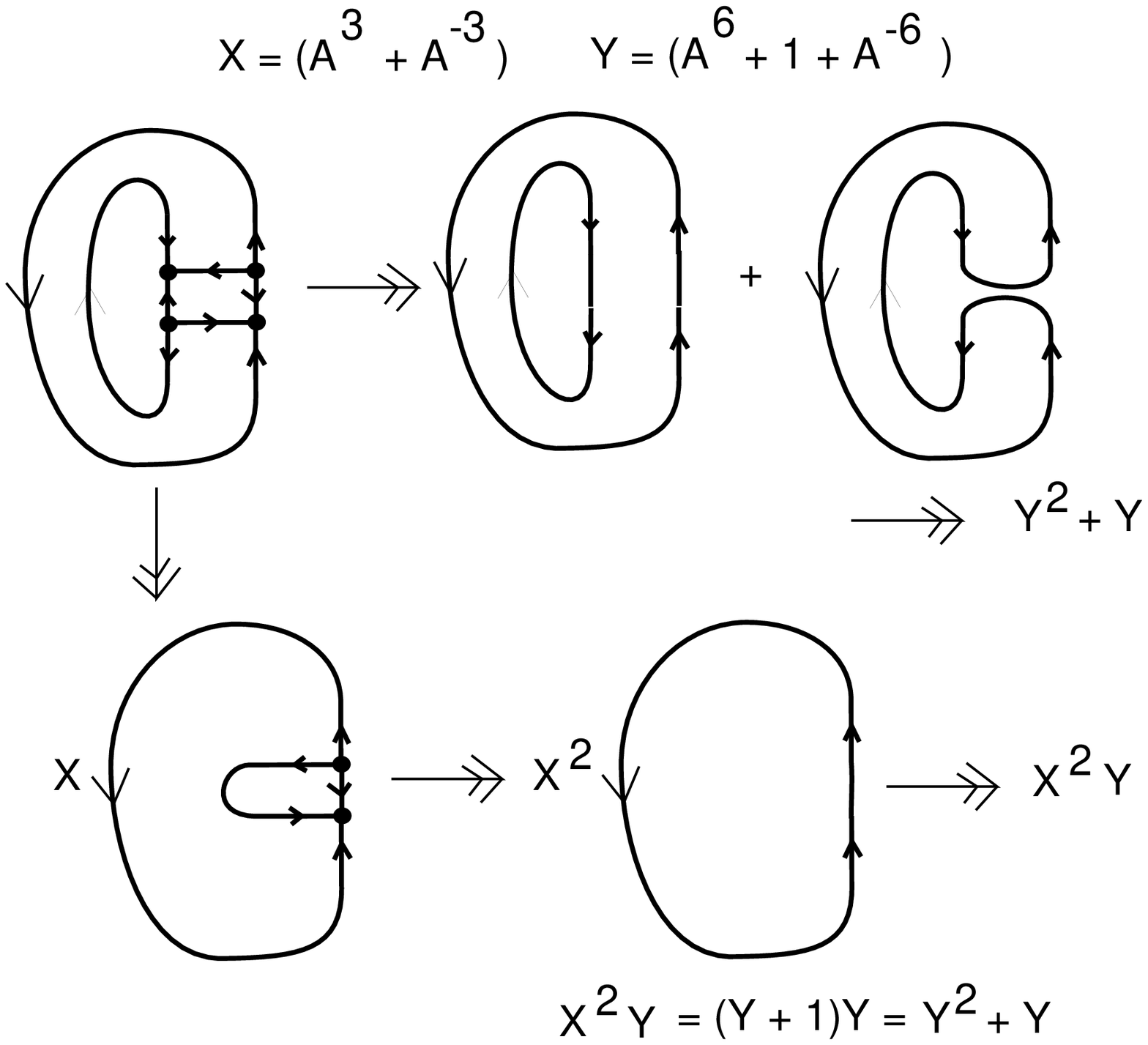}
\caption{\bf Resolving Two Annular Squares} \label{annular} \label{annular}
\end{figure}

Thus, we have performed the induction step and proved the
well-definiteness of the mapping. 
Note that the ideas of the proof are the same as in the classical case; however, we never assumed any planarity of the
graph; we just drew graphs planar whenever possible. Note that the situation in Figure~\ref{triang} is principally non-planar.
The invariance under virtualization  follows from this definition because the graphical pieces into which we expand a crossing, as in Figure~\ref{muhoya}, are, as graphs, symmetric under the interchange produced by the virtualization.
\end{proof}

\noindent {\bf Remark.} We can, in the case of flat knots or standard virtual knots represented on surfaces, enhance the invariant by keeping track of the embedding of the graph in the surface and only expanding on bigons and quadrilaterals that bound in the surface. We will not pursue this version of the
invariant here. In undertaking this program we will produce evaluations that are not invariant under the 
$Z$-move for flats or for virtual knots. 
\smallbreak

Now we give a formal description of our main invariant. This evaluation is invariant under the 
$Z-move.$ It is defined for virtual knots and links and it specializes to an invariant of free knots.
Let $K$ be an oriented virtual link diagram. With every classical
crossing of $K$, we associate two local states: the {\it oriented} one and
the {\it unoriented} one: the oriented one shown in the upper picture of
Fig. \ref{kuperberg}, and the unoriented one shown in the lower
picture of Fig. \ref{kuperberg}. A {\it state} of the diagram is a choice of local state for each crossing in the diagram.
\smallbreak

We define the bracket $[[\cdot]]$ (generalized Kuperberg bracket) as follows. Let $K$ be an
oriented virtual link diagram.
For a state $s$ of a virtual knot diagram $K$, we define the weight
of the state as the coefficient of the corresponding graph according
to the Kuberberg relations (\ref{kuperberg}). More precisely, the weight of a
state is the product of weights of all crossings, where a weight of
a positive crossing is $A^{ 2wr}$ for the oriented resolution and
$-A^{- wr}$ for the unoriented resolution, $wr$ stands for the writhe
number (the oriented sign) of the crossing.

Set
\begin{equation}
[[K]] = \sum_{s} w(K_{s})\cdot f(K_{s}) \in {\cal M},
\end{equation}

where $w(s)$ is the weight of the state.

\begin{theorem}
For a given diagram $K,$ the normalized bracket $(A^{-8wr(K)})[[K]]$ is invariant under all Reidemeister moves
and the virtualization move. Here $wr(K)$ denotes the writhe obtained by summing the signs of all the classical crossings in the corresponding diagram.
\end{theorem}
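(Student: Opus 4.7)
The plan is to verify the theorem by checking each move type separately: the three classical Reidemeister moves, the generalized virtual (detour) moves, and the virtualization move. Invariance under the virtual moves is essentially automatic from Statement 1, since $f$ is defined on abstract graphs and the detour move alters only the immersion of the underlying framed graph, not its combinatorial structure. The real content is then to show that $[[K]]$ itself is invariant under R2, R3, and virtualization, and that its behaviour under R1 is controlled entirely by the writhe, so that the normalization $A^{-8wr(K)}$ compensates exactly.

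For R1, the argument reduces to a local calculation on a single curl. Expanding the classical crossing of a positive curl gives $A^{2}$ times the oriented resolution, which detaches a circle from the underlying strand, plus $-A^{-1}$ times the unoriented resolution, whose local graph contains a bigon. Evaluating the circle as $A^{6}+A^{-6}+1$ and resolving the bigon by the appropriate rule from Figure~\ref{kupbra} collapses both summands to a Laurent-polynomial scalar multiple of the smooth strand. The arithmetic that Kuperberg carries out in the planar case applies verbatim, yielding total scalar $A^{8}$ for a positive curl and $A^{-8}$ for a negative curl; hence multiplying by $A^{-8wr(K)}$ cancels the R1 defect precisely.

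For R2 and R3, the strategy is to expand all crossings inside the tangle on both sides of the move and use the bigon and quadrilateral reductions of Figure~\ref{kupbra} to bring the two resulting linear combinations of graphs to a common reduced form. In R2 the two crossings have opposite signs, so the writhe is unchanged and the four-term state sum collapses, via bigon resolution and loop evaluation, to the crossingless parallel-strand tangle. In R3 the writhe is again preserved and one must match eight-term sums; the needed identities among graphs are exactly those underlying the quadrilateral-and-bigon reductions. Virtualization is then handled as indicated at the end of the proof of Statement 1: both graphical pieces into which a crossing is expanded are symmetric, as abstract graphs, under the interchange of strands produced by the two flanking virtual crossings, while the weights and the writhe are unaffected.

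The main obstacle is to carry out the R3 verification without relying on planarity of the ambient diagram. In the classical case Kuperberg's reductions can be pictured inside a planar disk, but here the surrounding graph may be irreducibly non-planar. The remedy is precisely Statement 1: the bigon and quadrilateral reductions are defined on abstract trivalent structures and are independent of the planar realization, so the same sequence of local reductions used to verify R3 in the planar case transfers without change to the virtual setting. With R1 normalized, R2, R3, and virtualization checked, and the virtual moves absorbed into the definition of $f$, the normalized bracket is a topological invariant.
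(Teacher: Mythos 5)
Your proposal is correct and follows essentially the same route as the paper: the Reidemeister-move checks (including the writhe factor $A^{\pm 8}$ from the curl) are Kuperberg's local calculations, the virtual/detour moves are automatic since the expansion lands in abstract graphs, virtualization follows from the symmetry of the expansion pieces, and the only genuinely new ingredient needed in the non-planar setting is the well-definedness of the graph reductions supplied by Statement 1. The paper's proof is just a terser statement of exactly this argument.
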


\begin{proof}
The invariance proof under Reidemeister moves repeats that of Kuperberg.
Note that the writhe behaviour is a consequence of the relations in Figure~\ref{kupbra}.
The only thing we require is that the Kuperberg relations
(summarized in Figure~\ref{kupbra}) can be applied to yield unique reduced graph polynomials.
 The discussion preceding the proof, proving Statement $1$,  handles this issue.
\end{proof}

From the definition of $[[K]]$ we have the following:
\begin{corollary}
If $[[K]]$ does not belong to $\Z[[A,A^{-1}]]\subset {\cal M}$ then
the knot $[[K]]$ is not classical.
\end{corollary}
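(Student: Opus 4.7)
The plan is to prove the corollary by contraposition: I will show that if $K$ admits a classical diagram $D$, then $[[K]]\in\Z[A,A^{-1}]$, and hence any $[[K]]$ lying strictly outside $\Z[A,A^{-1}]$ rules out $K$ being classical. By Theorem 3.2, the (normalized) bracket is an invariant of the virtual isotopy class, so I am free to compute it on the classical diagram $D$; the normalization factor $A^{-8\mathrm{wr}(K)}$ is a unit in $\Z[A,A^{-1}]$, so it neither introduces nor removes nontrivial graph factors from $\mathcal{M}$.

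Starting from such a classical diagram $D$, I would write
\[
[[D]]=\sum_{s} w(D_s)\,f(D_s),
\]
where the sum runs over the $2^{c(D)}$ states (oriented/unoriented choice at each classical crossing) and $w(D_s)\in\Z[A,A^{-1}]$. Because $D$ is classical, it has no virtual crossings, and each state $D_s$ is obtained by performing the local replacements of Figure~\ref{kuperberg} inside a planar diagram. Consequently each $D_s$ is a trivalent bipartite graph embedded in the plane (possibly together with closed curves). Thus the task reduces to showing $f(D_s)\in\Z[A,A^{-1}]$ for every planar $D_s$.

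Next I would invoke Kuperberg's original planar reduction \cite{Kup}: every planar trivalent bipartite graph contains at least one innermost face that is a bigon or a quadrilateral, so the three reduction relations of Figure~\ref{kupbra} (bigon removal, quadrilateral removal, loop evaluation) can be applied repeatedly, and at every stage planarity is preserved by the planar nature of the moves. Hence after finitely many reductions the graph reduces to a disjoint union of circles, which evaluate to powers of $(A^6+A^{-6}+1)\in\Z[A,A^{-1}]$. This is precisely the scenario in which the value of $f$ lies in $\Z[A,A^{-1}]\subset\mathcal{M}$, and Statement 1 guarantees that \emph{any} sequence of reductions produces the same answer, so $f(D_s)\in\Z[A,A^{-1}]$ regardless of the order of reductions.

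The only potential obstacle is convincing oneself that no irreducible graph from $\mathcal{T}$ can appear during the reduction of a planar $D_s$: a graph in $\mathcal{T}$ has neither bigons nor quadrilaterals, whereas every planar connected trivalent graph must, by an Euler characteristic argument, contain a face of length at most $4$. This is the crux and is precisely the content of the classical planar case of Kuperberg's theorem; once this is cited, the contrapositive closes the argument. Therefore if $[[K]]\notin\Z[A,A^{-1}]$, no classical diagram can represent $K$, which is the statement of the corollary.
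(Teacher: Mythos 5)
Your proposal is correct and follows the same route the paper has in mind: the paper gives no explicit argument ("From the definition of $[[K]]$ we have the following"), relying on the fact, already invoked via Kuperberg, that states of a classical (planar) diagram are planar trivalent bipartite graphs which reduce all the way to scalars, so that $[[K]]$ of a classical knot lies in $\Z[A,A^{-1}]$ and the corollary is its contrapositive. You simply make explicit the two points the paper leaves implicit --- invariance (up to the unit $A^{-8\mathrm{wr}(K)}$) lets one compute on a classical diagram, and the Euler-characteristic/Kuperberg argument forces every planar state to reduce to circles --- so nothing further is needed.
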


Recalling that a free link is an equivalence of virtual knots modulo
virtualizations and crossing switches and taking into account that
the skein relations in Figure~\ref{kuperberg} for $[[\cdot]]$ for $\skcrro$
and $\skcrlo$ are the same when specifying $A=1$, we get the
following

\begin{corollary}
$[[K]]_{A=1}$ and $[[K]]_{A= -1}$ are invariants of free links.
\end{corollary}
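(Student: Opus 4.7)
The plan is to show that $[[K]]_{A=\pm 1}$ is invariant under the three types of moves that generate the equivalence relation on free links: virtual/classical Reidemeister moves, the virtualization move, and crossing switches. The first two kinds are essentially already handled by the previous theorem, so the heart of the argument is the last.

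First I would observe that at $A = \pm 1$ the writhe normalization is trivial: $A^{-8\mathrm{wr}(K)} = 1$. Hence the normalized invariant $A^{-8\mathrm{wr}(K)}[[K]]$ and the unnormalized bracket $[[K]]$ coincide after setting $A = \pm 1$, and so by the preceding theorem this specialization is already invariant under all Reidemeister moves and under virtualization. This takes care of the virtual-isotopy part of the free-link equivalence for free and the virtualization that was built into the definition of free knots.

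Next I would verify invariance under crossing switches, which is the additional relation needed to pass from virtual links to free links. A positive crossing expands as $A^{2}\,(\text{oriented resolution}) + (-A^{-1})(\text{unoriented resolution})$ while a negative crossing expands as $A^{-2}\,(\text{oriented resolution}) + (-A)(\text{unoriented resolution})$. A direct check shows that at $A = 1$ both weight pairs become $(1,-1)$, and at $A = -1$ both pairs become $(1,1)$. Hence at these two specializations the state-sum contribution of a crossing depends only on its underlying flat structure and not on its sign. Since a crossing switch changes only the signs at a single crossing, term-by-term the state sums for $K$ and for the diagram with the crossing switched agree.

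Putting these ingredients together, $[[K]]_{A = \pm 1}$ is preserved by each generator of the equivalence relation defining free links. The main obstacle---if there is one---is really only bookkeeping: one should note that the writhes of $K$ and of the switched diagram may differ, but since the normalization factor is trivial at these specializations, this does not affect the value. Thus $[[K]]_{A = 1}$ and $[[K]]_{A = -1}$ descend to well-defined invariants on the set of free links, yielding the corollary.
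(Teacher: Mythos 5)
Your proposal is correct and follows essentially the same route as the paper: invariance under Reidemeister moves and virtualization comes from the main theorem (with the observation that $A^{-8\mathrm{wr}(K)}=1$ at $A=\pm 1$), and invariance under crossing switches follows because the expansion weights of positive and negative crossings coincide at these specializations. Your version merely spells out the coefficient check at $A=-1$ and the triviality of the writhe normalization, which the paper leaves implicit.
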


By the unoriented state $K_{us}$ of virtual knot diagram (resp.,
free knot diagram) $K$ we mean the state of $K$ where all crossings
are resolved in a way where an edge is added. {\bf Notation:}
$K_{us}$. Note that $K_{us}$ is treated as a graph.

\begin{corollary} Assume for a
virtual knot (or free knot) $K$ with $n$ classical crossings the
graph $K_{us}$ has neither bigons nor quadrilaterals. Then every
knot $K'$ equivalent to $K$ has a state $s$ such that $K'_{s}$
contains $K_{us}$ as a subgraph. This state can be treated as an element of ${\cal M}$.
In particular, $K$ is minimal, and all minimal diagrams of this free
knot have the same number of crossings.
 \label{crl1}
\end{corollary}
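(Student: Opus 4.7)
The plan is to locate $K_{us}$ as a distinguished monomial in ${\cal M}$ that appears with a nonzero coefficient in $[[K]]$, and then to pull this information back to any equivalent diagram $K'$ via the invariance from the preceding theorem. In the state sum $[[K]] = \sum_s w(K_s) f(K_s)$, the state in which every crossing is resolved unoriented contributes the summand $(-1)^n A^{-wr(K)} f(K_{us})$. By hypothesis, $K_{us}$ has no bigons or quadrilaterals, so $f(K_{us}) = K_{us}$ is already a basis monomial of ${\cal M}$: it is the formal product of its connected components, each a generator in ${\cal T}$.

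The key step for isolating this contribution is a vertex-count argument. Each unoriented resolution of a crossing introduces two trivalent vertices into the resulting graph, while each oriented resolution introduces none, and all reductions in Figure~\ref{kupbra} strictly decrease the vertex count. Hence any state $s \ne K_{us}$ of $K$ yields a graph $K_s$ with strictly fewer than $2n$ trivalent vertices, and $f(K_s)$ is a linear combination of monomials of total vertex count strictly less than $2n$. Since $f(K_{us})$ has total vertex count exactly $2n$, no such monomial can equal $f(K_{us})$. Therefore the coefficient of $f(K_{us})$ in $[[K]]$ is precisely $(-1)^n A^{-wr(K)}$, a nonzero Laurent monomial which specializes to $\pm 1$ at $A = \pm 1$ for the free knot case.

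Invoking the invariance theorem for any $K' \equiv K$, the same basis monomial must appear in $[[K']]$ with a nonzero coefficient, so some state $s$ of $K'$ must reduce to a term divisible by $f(K_{us})$. The vertex-count argument, applied in reverse, forces $K'_s$ to carry at least $2n$ trivalent vertices; since each crossing of $K'$ contributes at most two trivalent vertices to any state, $K'$ has at least $n$ classical crossings. This shows $K$ is minimal, and applying the same argument to any other minimal diagram equivalent to $K$ gives that every minimal diagram has exactly $n$ crossings.

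The main obstacle is passing from ``$f(K_{us})$ divides a reduction of $K'_s$'' to the stronger claim that $K_{us}$ is a subgraph of $K'_s$. I plan to exploit trivalence: all three edges at every vertex of $K_{us}$ already lie within $K_{us}$ itself, so any isomorphic copy of $K_{us}$ embedded in the trivalent graph $K'_s$ is necessarily a union of connected components of $K'_s$. The argument is then to choose a reduction order that first resolves all bigons and quadrilaterals lying outside a distinguished copy of $K_{us}$ inside $K'_s$; this leaves $K_{us}$ untouched, since $K_{us}$ itself contains no reducible substructures. The most delicate point is verifying rigorously that the reductions are local enough for this separation to be carried out, i.e., that $K_{us}$ cannot be assembled out of scattered fragments of $K'_s$ via the reduction process, and here the uniqueness of reduction established in Statement~1 is essential.
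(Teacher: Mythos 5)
Your identification of the all-unoriented state's contribution $(-1)^nA^{-wr(K)}K_{us}$, the vertex-count argument showing no other state (and no reduction, since every move in Figure~\ref{kupbra} is non-increasing on trivalent vertices and strictly decreasing when a bigon or quadrilateral is resolved) can produce a monomial with $2n$ vertices, and the transfer to $K'$ via invariance of the normalized bracket are exactly the intended route, and they already suffice for the minimality conclusions: if some state $K'_s$ must reduce to a term containing the $2n$-vertex monomial $K_{us}$, then $K'_s$ has at least $2n$ trivalent vertices and hence $K'$ has at least $n$ classical crossings.

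The gap is in the subgraph-containment claim, which is the actual content of the corollary beyond minimality. Your proposed argument is circular: you plan to ``choose a reduction order that avoids a distinguished copy of $K_{us}$ inside $K'_s$,'' but the existence of such a copy is precisely what has to be proved from the fact that the monomial $K_{us}$ occurs in $f(K'_s)$. Moreover, your remark that trivalence forces any copy of $K_{us}$ in $K'_s$ to be a union of connected components shows you are reading ``subgraph'' too literally, and in that literal sense the claim fails: if $K'$ differs from $K$ by a second Reidemeister move and both new crossings are resolved unoriented, the resulting state graph reduces to a term $K_{us}$ but contains $K_{us}$ only after deleting the two ``rung'' edges and smoothing the resulting bivalent vertices, not as a full trivalent subgraph. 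The missing (and sufficient) observation is that each reduction move is realized inside its input graph: the bigon relation deletes one of the two parallel edges and smooths the two bivalent vertices, each term of the quadrilateral relation deletes a pair of opposite edges of the square and smooths, and the circle relation deletes a component. Hence, by induction on the number of reductions, every monomial appearing in $f(G)$ is obtained from a subgraph of $G$ by smoothing bivalent vertices, i.e.\ $G$ contains a subdivision of it. Applying this to the state $K'_s$ whose reduction contributes the monomial $K_{us}$ gives the containment asserted in the corollary (understood up to subdivision), with no need to control the order of reductions at all; the uniqueness from Statement~1 is needed only to know that $[[K']]$ is well defined, not for this step.
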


Note that the coincidence of $K_{us}$ and $K'_{us}$ does not
guarantee the coincidence of $K$ and $K'$. For example, if $K$ and
$K'$ differ by a third unoriented Reidemeister move, then, of
course, $[[K]]=[[K']]$. The corresponding resolutions $K_{us}$ and
$K'_{us}$ will coincide (they will have a hexagon inside).
 
\begin{corollary}
Let $K$ be a four-valent framed graph with $n$ crossings and with
girth number at least five.
Then the hypothesis of Corollary \ref{crl1} holds. \label{crl2}
\end{corollary}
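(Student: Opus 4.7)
The plan is to show the stronger statement that $K_{us}$ has girth number at least $5$ whenever $K$ does; this immediately rules out bigons and quadrilaterals in $K_{us}$ and so verifies the hypothesis of Corollary \ref{crl1}.

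First I would fix notation for $K_{us}$. At each four-valent vertex $v$ of $K$, the unoriented resolution of Figure \ref{kuperberg} replaces $v$ by two trivalent vertices $v_{1}, v_{2}$ joined by a \emph{new} edge, with one opposite pair of half-edges of $v$ attached to $v_{1}$ and the other opposite pair attached to $v_{2}$. The remaining edges of $K_{us}$ are in canonical bijection with the edges of $K$; call these \emph{old} edges. Two structural facts will be used repeatedly: (a) distinct new edges are vertex-disjoint, since each new edge is localised at a single crossing of $K$, and (b) collapsing each pair $\{v_{1},v_{2}\}$ back to $v$ and deleting the new edges sends any closed walk $C$ in $K_{us}$ of length $L$ using $k$ new edges to a closed walk in $K$ of length $L-k$.

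The body of the proof is a case analysis on the pair $(L,k)$ for a would-be cycle $C$ of length $L \in \{2,4\}$ in $K_{us}$. Observation (a) immediately rules out any configuration in which two adjacent edges of $C$ are both new (they would share a vertex and hence coincide, being the unique new edge at a single crossing), which eliminates $L=2$ with $k=2$, the subcase of $L=4$ with $k=2$ having adjacent new edges, and the cases $L=4$ with $k=3$ or $k=4$. When $k=0$, fact (b) gives a closed walk of length $L$ in $K$; if its image visits $L$ distinct crossings we obtain an $L$-cycle in $K$, whereas any collapse of projected crossings produces either a loop or a bigon in $K$. For $L=2$ with $k=1$ the lone old edge of $C$ joins $v_{1}$ to $v_{2}$ at a single crossing, which is possible only if the corresponding edge of $K$ is a loop. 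For $L=4$ with $k=1$ the three old edges realise a closed walk of length $3$ in $K$, yielding a triangle or, after collapses, a loop or bigon. For $L=4$ with $k=2$ and non-adjacent new edges, the two old edges connect the same pair of crossings of $K$, producing a multi-edge. In every case $K$ contains a cycle of length at most $4$, contradicting the girth hypothesis.

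The main obstacle I anticipate is simply being disciplined about the degenerate subcases in which the projection to $K$ revisits a crossing or reuses an edge: whenever two trivalent endpoints of an old edge of $C$ project to the same crossing, the corresponding edge of $K$ must be a loop, and whenever two old edges of $C$ project to a common edge of $K$ one obtains a multi-edge. These degenerations only produce shorter cycles in $K$, but the case list must be written so that every such collapse is visibly incompatible with the girth of $K$ being at least $5$. Once this is verified, Corollary \ref{crl1} applies verbatim and gives the conclusion.
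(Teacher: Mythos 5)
Your argument is correct and is essentially the argument the paper has in mind: the paper states this corollary without proof, and your case analysis of the possible short cycles in $K_{us}$, sorted by how many of the new resolution edges they use, supplies exactly the details that make the implication from girth $\geq 5$ in $K$ to the absence of bigons and quadrilaterals in $K_{us}$ explicit. One harmless slip: in the unoriented resolution the two half-edges meeting at each trivalent vertex are the two incoming (respectively outgoing) ones, which are adjacent rather than opposite at the crossing, but since your case analysis never uses how the four half-edges are distributed between $v_{1}$ and $v_{2}$, this does not affect the proof.
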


So, this proves the minimality of a large class of framed
four-valent graphs regarded as free knots: all graphs having girth
$\ge 5$ and many other knots. For example, consider the free knot $K_{n}$
whose Gauss diagram is the $n$-gon, $n>6$: it consists of $n$ chords
where $i$-th chord is linked with exactly two chords, those having
numbers $i-1$ and $i+1$ (the numbers are taken modulo $n$). Then $K_{n}$
satisfies the condition of \ref{crl1} and, hence, is minimal in a
strong sense.

Note that the triviality of such $n$-gons as free knots was proven
only for $n\le 6$.

\begin{remark}
The above argument works for links and tangles as well as knots.
\end{remark}

From the construction of $[[\cdot]]$ we get the following corollary.
\begin{corollary}
Let $K$ be a virtual (resp., flat) knot, and let
$\Gamma_{1}\cdots\Gamma_{k}$ be a product of irreducible
graphs which appear as a summand in $[[K]]$ (resp.,
$[[K]]|_{A=1}$) with a non-zero coefficient.
 Then the minimal virtual crossing number of $K$ is greater than or
equal to the sum of crossing numbers of graphs: $cr(\Gamma_{1})+\cdots + cr(\Gamma_{k})$ and the underlying genus of $K$ is
less than or equal to the
sum of genera $g(\Gamma_{1})+\cdots + g(\Gamma_{k})$
(in virtual or free knot category).
\end{corollary}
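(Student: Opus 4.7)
The plan is to compare, for a fixed diagram $K'$ of $K$ realizing the minimum virtual crossing number (respectively the minimum underlying genus), the complexity of the state graphs $K'_{s}$ with the complexity of their irreducible summands after bigon/quadrilateral reduction, using the invariance of $[[\cdot]]$ established in the preceding invariance theorem. Set $v=v(K')=v(K)$ and fix a surface $\Sigma$ on which $K'$ lives with $g(\Sigma)=g(K)$. The argument is simply the monotonicity of the virtual-crossing count and of embeddability in $\Sigma$ under (i) the state expansion of Figure~\ref{kupbra} and (ii) the bigon/quadrilateral reductions, followed by an inversion of this process for the genus inequality.

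For step (i), each Kuperberg resolution of a classical crossing is a planar local replacement (two parallel arcs, or the small trivalent Y-piece), introducing no new virtual crossings and leaving the ambient surface unchanged. Hence every state graph $K'_{s}$ is drawn on $\Sigma$ with at most $v$ virtual crossings. For step (ii), each application of a bigon or quadrilateral relation from Figure~\ref{kupbra} takes place inside a contractible disk; any virtual crossings that happen to lie in that disk are detoured out first, and the substitution itself inserts a picture with strictly fewer vertices and no new virtual crossings. Successive applications may split a connected piece into disjoint pieces, but all pieces remain drawn on $\Sigma$ in disjoint subregions. Consequently, for any product $\Gamma_{1}\cdots\Gamma_{k}$ appearing in a reduction of some state $K'_{s}$, we obtain $\sum_{j}cr(\Gamma_{j})\le v(K)$.

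The genus inequality is obtained by reversing the picture. Start from the leaf $\Gamma_{1}\sqcup\cdots\sqcup\Gamma_{k}$ realized as a disjoint embedding on the connected sum $\Sigma_{g(\Gamma_{1})}\#\cdots\#\Sigma_{g(\Gamma_{k})}$, which has genus $\sum_{j}g(\Gamma_{j})$, and undo one by one the bigon/quadrilateral substitutions along the reduction path that led to this leaf, then collapse each Kuperberg local piece back to a classical crossing. Each reverse move is again planar and local. Because $[[\cdot]]$ is an invariant and the coefficient of $\Gamma_{1}\cdots\Gamma_{k}$ in $[[K]]$ is nonzero, this reduction path genuinely arises from a state of some diagram of $K$, so the retraced diagram represents $K$ and lives on a surface of genus at most $\sum_{j}g(\Gamma_{j})$, yielding $g(K)\le\sum_{j}g(\Gamma_{j})$. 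For flat knots the same argument applies after specializing to $A=1$, since the corollary above shows that the resulting specialization of $[[\cdot]]$ is an invariant of the free/flat class.

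The main obstacle is verifying that the bigon/quadrilateral reductions really are local enough to preserve the ambient surface and not to create virtual crossings; this reduces to the detour principle for virtual crossings together with the case analysis already carried out in Figures~\ref{undobigonsquare}--\ref{annular}. A secondary subtlety is checking that the reversed reduction in the genus step produces a state graph that collapses to a diagram genuinely equivalent to $K'$ (and hence to $K$), which is handled by retracing the exact sequence of reductions and Kuperberg expansions along which the summand $\Gamma_{1}\cdots\Gamma_{k}$ first appeared.
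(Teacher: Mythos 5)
Your first half (the virtual crossing number bound) is essentially the argument the paper has in mind when it says the corollary follows ``from the construction'': compute $[[K]]$ from a diagram realizing the minimal virtual crossing number $v$, observe that the Kuperberg smoothings are local and create no virtual crossings, and that each bigon/quadrilateral reduction can be realized without increasing the number of virtual crossings, so any product $\Gamma_{1}\cdots\Gamma_{k}$ surviving with non-zero coefficient is drawn with at most $v$ crossings, whence $\sum_{j}cr(\Gamma_{j})\le v$. One small imprecision: a bigon or quadrilateral of the abstract graph need not sit inside a disk of the drawing, and ``detouring the virtual crossings out first'' can in general create new virtual crossings elsewhere, so it does not by itself preserve the count. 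The clean way to argue monotonicity is to route each new edge of the reduction parallel to one of the deleted polygon edges, so it inherits at most that edge's crossings and the total cannot increase.

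The genus half has a genuine gap. Your plan is to embed $\Gamma_{1}\sqcup\cdots\sqcup\Gamma_{k}$ minimally on a surface of genus $\sum_{j}g(\Gamma_{j})$ and then ``undo'' the reductions and the Kuperberg smoothings to recover a diagram of $K$ on that surface, asserting that each reverse move is planar and local. That assertion fails: the forward moves are local because the replacement edges can be drawn along the polygon being deleted, but the reverse of a quadrilateral reduction must fuse two edges of the reduced graph into a square, and the reverse of an oriented crossing smoothing must fuse two arcs back into a crossing; in a \emph{fresh} minimal-genus embedding of the leaf there is no reason these edges or arcs are adjacent or parallel, so reinstalling the square or the crossing may force extra handles or virtual crossings. (Abstractly, reversing the reduction path only reproduces the original diagram $K'$; the whole content is on which surface it can be drawn, and that is exactly what is not controlled.) Note also that the estimate that does follow from the same forward monotonicity you used for crossings is the opposite inequality: performing the expansion and all reductions on a minimal-genus surface $\Sigma$ for $K$ keeps every intermediate graph embedded in $\Sigma$, so $\Gamma_{1}\sqcup\cdots\sqcup\Gamma_{k}$ embeds in $\Sigma$, and by additivity of graph genus over components $\sum_{j}g(\Gamma_{j})\le g(K)$, i.e.\ the graphs give a \emph{lower} bound for the underlying genus, parallel to the crossing-number bound; the upper-bound reading you set out to prove is not reachable by the proposed reversal.
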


The above corollary easily allows one to reprove the theorem first proved in \cite{MV},
that the number of virtual crossings of a virtual knot grows quadratically with respect
to the number of classical knots for some families of graphs. In \cite{MV}, it was
done by using the parity bracket. Now, we can do the same by using $$Free[[K]] = [[K]]|_{A=1}.$$
With this invariant one can easily construct infinite series of trivalent bipartite graphs which serve as $K_{us}$ for some sequence of knots $K_{n}$ and such
that the minimal crossing number for
these graphs grows quadratically with respect to the number of crossings. Recalling that
the number of vertices comes from the number of classical crossings of $K_{n}$, we
get the desired result. 
\smallbreak

\section  {\bf Remarks} 
This article arose through our discussions of new possibilities in virtual knot theory and
in relation to advances of Manturov using parity in
virtual knot theory, particularly in the area of free knots. 
Manturov was the first person to show that many free
knots are non-trivial.   A free
knot is a Gauss diagram with only chords and without signs on the chords
or  orientations on them. Such Gauss diagrams are taken up to Reidemeister
moves and they underlie the structures of virtual knot theory.  
\bigbreak

The constructed invariant has the following properties.

\begin{enumerate}

\item It coincides with the usual $sl(3)$ quantum invariant in the case
of the classical knots.

\item It does not change under virtualization (i.e. under the $Z$-move as defined in this paper) ; its specification at $A=1$ gives rise to an
invariant of free knots.

\item For virtual knots, that are complcated enough, the new  invariant is valued
in a certain module whose generators are graphs.

\item The invariant produces many new examples of minimality in a strong sense
for free knots in the flavour of the parity bracket, \cite{Sbornik1,Parity}. But it goes beyond simple parity and
can discriminate certain free knots that have only even crossings.
\end{enumerate}

In the paper \cite{MOY}, there is a model for the $sl(n)$-version of
the Homflypt polynomial for classical knots. This model is  based on patterns of smoothings as shown
in Figure~\ref{muhoya}.

\begin{figure}[htb]
     \begin{center}
     \begin{tabular}{c}
      \includegraphics[width=100pt]{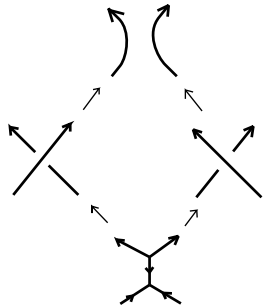}
     \end{tabular}
     \caption{\bf Murakami-Ohtsuki-Yamada relation for $sl(n)$}
     \label{muhoya}
\end{center}
\end{figure}


These patterns suggested to us the techniques we use in this paper with the Kuperberg bracket, and we expect to generalize them further.
In this method,  the value of the polynomial for a knot
is equal to the linear combination of the values for two graphs
obtained from the knot by resolving the two crossings as shown in Figure~\ref{muhoya}. 

In the present paper we enhance the Homflypt $sl(3)$ invariant by
using the following observation: {\it if a trivalent graph is complicated
enough so that it admits no further simplification, it can be
evaluated as itself. } Then the obtained ``polynomial'' $sl(3)$ invariant
 will be valued not just in Laurent polynomials in one
variable $A$, but  in a larger ring where trivalent graphs act as
variables. A key point here is that it can be the case that a topological object such as a free knot is its 
{\it own} invariant!  See \cite{AC} for a use of this idea. This is what happens when we meet irreducibility in the graphical expansion of our invariant. Then it is possible for the expansion to simply stop at the object itself.  This rigidity occurs when the non-planar graphs in the expansion of the generalized Kuperberg bracket are irreducible. Then these graphs are valued as themselves, rather than as polynomials in other graphs.
\bigbreak

\end{document}